\newtheorem{corollary}{Corollary}
\newtheorem{lemma}{Lemma}
\newtheorem{theorem}{Theorem}
\newtheorem{assumption}{Assumption}
\newtheorem{remark}{Remark}
\newenvironment{proof}{Proof.}{\hfill $\square$}
\begin{document}

\begin{frontmatter}

\title{Distributed sub-optimal resource allocation over weight-balanced graph via singular perturbation \thanksref{footnoteinfo}} 

\thanks[footnoteinfo]{This paper was not presented at any IFAC
meeting. Corresponding author Y.~Hong. Tel. +86-10-82541824.
Fax +86-10-82541832.}

\author[CAS]{Shu Liang}\ead{sliang@amss.ac.cn},    
\author[CAS]{Xianlin Zeng}\ead{xianlin.zeng@amss.ac.cn},               
\author[CAS]{Yiguang
Hong}\ead{yghong@iss.ac.cn}  

\address[CAS]{Key Laboratory of Systems and Control, Academy of
Mathematics and Systems Science, Chinese Academy of Sciences,
Beijing, 100190, China}                   

\begin{keyword}                           
Distributed optimization, resource allocation, sub-optimal algorithm, weight-balanced graph, continuous-time design, singular perturbation, exponential convergence.           
\end{keyword}                             

\begin{abstract}                          
In this paper, we consider distributed optimization design for resource allocation problems over weight-balanced graphs. With the help of singular perturbation analysis, we propose a simple sub-optimal continuous-time optimization algorithm. Moreover, we prove the existence and uniqueness of the algorithm equilibrium, and then show the convergence with an exponential rate. Finally, we verify the sub-optimality of the algorithm, which can approach the optimal solution as an adjustable parameter tends to zero. 
\end{abstract}

\end{frontmatter}

\section{Introduction}
Distributed optimization has attracted intense research attention in recent years, due to its theoretic significance and broad applications in various research fields, and many distributed algorithms have been developed to optimize a global objective or cost function based on agents' local cost functions and information exchange between neighbors in a multi-agent network \cite{Yuan2016Zeroth,Mokhtari2017Network}. So far, much effort has also been done for distributed continuous-time algorithm design, referring to \cite{Shi2013reaching,Gharesifard2014Distributed,Liu2015Second,Lou2016Distributed,Yang2017Multi} and the references therein, partially because of its applications in physical plants or hybrid systems and available continuous-time control methods.

Resource allocation is one of the most important optimization problems, which has been widely investigated in various areas such as economic systems, communication networks, and power grids; and various algorithms, centralized or decentralized have been constructed, for example, in \cite{Arrow1958Studies,Heal1969Planning,Lakshmanan2008Decentralized,Zappone2016Energy}.   Different from the most existing results, \cite{Cherukuri2016Initialization,Yi2016Initialization} considered distributed initialization-free continuous-time algorithms to solve the optimal resource allocation problem with applications to economic dispatch of power systems.  The algorithms given in \cite{Yi2016Initialization,Gharesifard2016Price} dealt with undirected graph cases, based on the symmetry of the Laplacians associated with the given graphs. As pointed out in \cite{Gharesifard2014Distributed,Gharesifard2016Price}, there were examples to make a distributed algorithm for undirected graphs divergent for some directed graphs. For practical applications, distributed optimization algorithms over balanced directed graphs were developed with or without the resource allocation constraint, for example, in \cite{Gharesifard2014Distributed,Cherukuri2016Initialization}.   However, these algorithms, involving the usage of the eigenvalues of the Laplacians, might yield additional computation burden in the distributed implementation, and make the convergence quite sensitive to the network topology.

Partially because distributed optimization just became a hot topic in this decade, there are quite few results about its sub-optimal algorithms and related analysis. For example, \cite{Nedic2009Approximate} proposed an algorithm without exactly solving the considered problem, but with fast convergence rate.  In fact, sub-optimal design deserves investigation, though the exactness of optimal solutions may be sacrificed.   As we know, the exact optimization solution may be hard to obtain due to technical difficulties, complexity, or computational cost; on the contrary, sub-optimal algorithms may provide considerable benefits with simple feasible designs and even performance enhancement.  In distributed design for large-scale networks, we may particularly need sub-optimal algorithms to reduce the computational complexity or sensitivity to the network topology, rather than to seek high-cost exact optimal solution \cite{Bhatti2016Large}.

Based on the above observation, the motivation of this paper is to study a distributed sub-optimal algorithm design for the resource allocation optimization over a balanced directed graph. Our algorithm is of lower dimensions than existing ones, with the reduction of computational burden and information exchanging. Moreover, its convergence is kept over any strongly connected and weight-balanced graph because its design does not depend on any specific knowledge of the graph.   To achieve this, we adopt a singular perturbation idea in the distributed sub-optimal design. Note that the singular perturbation theory provides powerful tools for (continuous-time) control design \cite{Kokotovic1999Singular}, and the well-known high-gain technique and semi-global stabilization design are closely related to singular perturbation \cite{Khalil2002Nonlinear}.

The contributions of this paper can be summarized as follows. (i) We first propose a distributed sub-optimal algorithm to solve the continuous-time resource allocation problem for weight-balanced graphs, without using any information of the network topology. The sub-optimal design is simpler than those optimization ones. In light of the conventional fixed-point theory, we prove the existence and uniqueness of the algorithm equilibrium. (ii)   We adopt a singular perturbation idea in our design, totally different from that given in \cite{Gharesifard2014Distributed,Cherukuri2016Initialization,Yi2016Initialization}, and then show that the quasi-steady-state model of our algorithm is exactly the primal-dual optimization algorithm. Note that the original primal-dual algorithm may not be directly implementable in a fully distributed manner due to the coupled resource allocation constraint. (iii) We prove the convergence of the proposed sub-optimal algorithm with an exponential rate, and estimate the difference of the sub-optimal solution from the optimal one, which, in fact, is bounded linearly by an adjustable parameter.  Moreover, we verify that the sub-optimal solution always satisfies the resource allocation constraint and can be made arbitrarily close to the optimal point as the parameter tends to $0$.

The paper organization is as follows: Section 2 provides preliminaries and formulates the problem, while Section 3 proposes the distributed algorithms. Then Section 4 presents the algorithm analysis, and finally, Section 5 gives some concluding remarks.

{\em Notations: } Let $\mathbb{R}^n$ be the $n$-dimensional real vector space and $\mathbb{B}$ be the unit ball. The Euclidean norm of vectors in $\mathbb{R}^n$ and its induced consistent matrix norm are denoted by $\|\cdot\|$. $col(x_1,...,x_N)$ stands for the column vector stacked with column vectors $x_i,\,(i=1,...,N)$, i.e., $col(x_1,...,x_N) = (x_1^{T},\,x_2^{T},\,\cdots,\,x_n^{T})^{T}$, and $1_{n} = col\{1,...,1\}\in \mathbb{R}^n$. $I_n$ is the identity matrix in $\mathbb{R}^{n\times n}$.  $\otimes$ denotes the Kronecker's product for matrices and $\det(\cdot)$ denotes the determinant of a matrix. For a smooth function $f:\mathbb{R}^n\to \mathbb{R}$, $\nabla f(x)$ and $\nabla^2 f(x)$ denote its gradient vector and Hessian matrix at point $x$, respectively.

\section{Preliminaries and Formulation}

In this section, we introduce relevant preliminary knowledge about convex analysis and graph theory and then formulate our problem.

\subsection{Preliminaries}
A function $f: \mathbb{R}^n\to \mathbb{R}$ is said to be {\em convex} if $f(\lambda z_1
+(1-\lambda)z_2) \leq \lambda f(z_1) + (1-\lambda)f(z_2)$ for any $z_1, z_2 \in \mathbb{R}^n$ and $\lambda\in (0,\,1)$. Moreover, it is said to be {\em $c_0$-strongly convex} for a constant $c_0>0$, if
\begin{multline}
f(\lambda z_1 +(1-\lambda)z_2) \leq \lambda f(z_1) + (1-\lambda)f(z_2) \\
- \frac{1}{2}c_0\lambda(1-\lambda)\|z_1-z_2\|^2.
\end{multline}
For a twice continuously differentiable function $f$, it is $c_0$-strongly convex if and only if $\nabla^2 f(x) \geq c_0 I_n$. In addition, for $c_0$-strongly convex and differentiable function $f$, there holds
\begin{equation}\label{eq:stronglyConvex}
f(y) \geq f(x) + \nabla f(x)^T (y-x) + \frac{1}{2}c_0\|y-x\|^2, \,\forall\, x,y\in \mathbb{R}^n.
\end{equation}

A function $g:\mathbb{R}^n\to \mathbb{R}$ is said to be {\em level bounded} \cite{Rockafellar1998Variational} if all sets of the form
\begin{equation}
\{x\in \mathbb{R}^n\,|\, g(x)\leq \alpha\}, \text{ for } \alpha\in \mathbb{R}^n
\end{equation}
are bounded. Obviously, the strong convexity and differentiability imply the level boundedness by \eqref{eq:stronglyConvex}.

A map $H:\mathbb{R}^n\to \mathbb{R}^n$ is said to be {\em locally Lipschitz continuous} at a point $x$ if there are constants $\delta>0$ and $\kappa = \kappa(x, \delta)$ such that
\begin{equation}\label{eq:LipchitzContinuous0}
\|H(x_1) - H(x_2)\| \leq \kappa \|x_1 - x_2\|, \, \forall\, x_1, x_2 \in x + \delta\mathbb{B}.
\end{equation}
Moreover, $H$ is said to be {\em $\kappa$-Lipshcitz continuous} if \eqref{eq:LipchitzContinuous0} holds irrespective of $x$ and $\delta$.

Consider a multi-agent network with its interaction topology described by a weighted graph $\mathcal{G} = \{\mathcal{V}, \mathcal{E}, \mathcal{A}\}$, where $\mathcal{V}= \{ 1,2, \ldots N\}$ is the node set, $\mathcal{E} \subseteq \mathcal{V} \times \mathcal{V}$ is the edge set, and $\mathcal{A} = [a_{ij}]_{N\times N}$ is an adjacency matrix with $a_{ij} >0$ if $(j,i) \in \mathcal{E}$ (meaning that agent $j$ can send its information to agent $i$), and $a_{ij} =0$, otherwise. If $a_{ij} = a_{ji}, \, \forall\, i,j \in \mathcal{V}$, then $\mathcal{G}$ is undirected. A path is a sequence of vertices connected by edges. A graph is said to be strongly connected if there is a path between any pair of vertices. For node $i \in \mathcal{V}$, the weighted in-degree and weighted out-degree are $d_{in}^i =\sum_{j=1}^N a_{ij}$ and $d_{out}^i =\sum_{j=1}^N a_{ji}$, respectively. A graph is weight-balanced if $\forall\,i \in \mathcal{V}, d_{in}^i = d_{out}^i$. The following lemma characterizes graph $\mathcal{G}$ by its (in-degree) Laplacian matrix, defined as $L= \mathcal{D}_{in} - \mathcal{A}$, where $\mathcal{D}_{in} = diag\{d_{in}^1, \ldots, d_{in}^N\} \in \mathbb{R}^{N \times N}$.

\begin{lemma}\cite{Bullo2009Distributed}
The following statements hold.
\begin{enumerate}[1)]
\item Graph $\mathcal{G}$ is undirected if and only if $L = L^T$.
\item Graph $\mathcal{G}$ is strongly connected if and only if zero is a simple eigenvalue of $L$.
\item Graph $\mathcal{G}$ is weight-balanced if and only if $L + L^T$ is positive semidefinite.
\end{enumerate}
\end{lemma}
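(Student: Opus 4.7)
The plan is to treat the three items separately, since each concerns a largely independent property of $L = \mathcal{D}_{in} - \mathcal{A}$.

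Item~1 is essentially a direct translation of definitions: $\mathcal{D}_{in}$ is diagonal and hence symmetric, so $L = L^{T}$ reduces to the condition $a_{ij} = a_{ji}$ for all $i \neq j$, which is precisely the definition of an undirected graph (the diagonal entries automatically agree since in-degree and out-degree coincide once adjacency is symmetric). First I would write out this one-line algebraic equivalence explicitly.

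Item~3 rests on the universal identity $L\mathbf{1}_{N} = 0$, which follows from each row of $L$ summing to zero by construction. For the $(\Rightarrow)$ direction, weight-balancedness supplies the companion identity $L^{T}\mathbf{1}_{N} = 0$, so $L + L^{T}$ is symmetric with zero row sums, non-positive off-diagonals $-(a_{ij}+a_{ji})$, and matching non-negative diagonals; it is therefore the Laplacian of the undirected graph obtained by symmetrizing weights to $a_{ij}+a_{ji}$, whose quadratic form reduces to the non-negative weighted sum of squared differences $\tfrac{1}{2}\sum_{i,j}(a_{ij}+a_{ji})(x_{i}-x_{j})^{2}$. For the $(\Leftarrow)$ direction, compute $\mathbf{1}_{N}^{T}(L+L^{T})\mathbf{1}_{N} = 2\,\mathbf{1}_{N}^{T}L\mathbf{1}_{N} = 0$; since $L+L^{T}$ is PSD, this forces $\mathbf{1}_{N}$ into its null space, giving $L^{T}\mathbf{1}_{N} = -L\mathbf{1}_{N} = 0$, which is precisely weight-balancedness.

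Item~2 is the main obstacle, since it is the only part that requires genuine spectral content. The natural route is via Perron--Frobenius applied to a non-negative shift $cI - L \geq 0$ for $c$ larger than $\max_{i} d_{in}^{i}$: strong connectivity of $\mathcal{G}$ is equivalent to irreducibility of $\mathcal{A}$, and hence of $cI - L$, whereupon Perron--Frobenius yields simplicity of the Perron eigenvalue $c$, which corresponds to simplicity of the zero eigenvalue of $L$. Conversely, if $\mathcal{G}$ fails to be strongly connected, a suitable relabeling brings $L$ into block-triangular form whose sink components each carry their own null vector proportional to $\mathbf{1}$, inflating the multiplicity of zero beyond one. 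Since the lemma is quoted directly from \cite{Bullo2009Distributed}, I would invoke the Perron--Frobenius portion of the argument rather than re-deriving it from scratch, keeping the overall treatment compact.
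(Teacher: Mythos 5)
The paper offers no proof of this lemma at all---it is simply quoted from Bullo et al.---so the only question is whether your arguments stand on their own. Items~1 and~3 do: item~1 is indeed the one-line observation that $L=L^T$ reduces to $\mathcal A=\mathcal A^T$ since $\mathcal D_{in}$ is diagonal, and your two directions for item~3 (identifying $L+L^T$ with the Laplacian of the symmetrized graph when $L^T1_N=0$, and extracting $(L+L^T)1_N=0$ from positive semidefiniteness via $1_N^T(L+L^T)1_N=0$) are both correct. The Perron--Frobenius half of item~2 is also sound: $cI-L\ge 0$ is irreducible exactly when $\mathcal G$ is strongly connected, and $1_N$ is a positive eigenvector, so $c$ is the simple Perron root and $0$ is a simple eigenvalue of $L$.

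The converse of item~2 is where your proof breaks, and in fact that direction is false as stated, so no repair is possible. After block-triangularizing $L$ along a topological order of the condensation of $\mathcal G$, only the \emph{closed} strongly connected components---those receiving no edges from outside, whose diagonal block is itself an irreducible Laplacian---contribute a zero eigenvalue; every other diagonal block is irreducibly diagonally dominant with at least one strictly dominant row, hence nonsingular. A digraph that fails to be strongly connected can still have exactly one closed component. Concretely, for the path in which $1$ sends to $2$ and $2$ sends to $3$, the paper's convention gives $L=\bigl[\begin{smallmatrix}0&0&0\\ -1&1&0\\ 0&-1&1\end{smallmatrix}\bigr]$ with eigenvalues $0,1,1$: zero is simple but the graph is not strongly connected. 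So your step ``failing strong connectivity inflates the multiplicity of zero beyond one'' does not hold. The statement actually proved in the cited reference is that $\rank L=N-1$ if and only if $\mathcal G$ contains a globally reachable vertex; strong connectivity is only sufficient. You should therefore either prove only the implication ``strongly connected $\Rightarrow$ zero is a simple eigenvalue,'' or restate item~2 with ``globally reachable vertex'' in place of ``strongly connected'' (the two notions do coincide under the weight-balanced hypothesis the paper imposes later, which is why the imprecision is harmless downstream, but it cannot be proved in the form given).
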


\subsection{Problem formulation}

Distributed resource allocation optimization problem is usually formulated as follows. For each agent $i \in \mathcal{V}$, there are a local decision variable $x_i\in \mathbb{R}^{n}$ and a local cost function $f_i(x_i):\mathbb{R}^n\to \mathbb{R}$. The agents cooperate each other in order to minimize the total cost function of the network, defined as $f(\bm{x}) \triangleq \sum_{i=1}^Nf_i(x_i)$, subject to the resource allocation constraint $\sum_{i=1}^Nx_i = \sum_{i=1}^Nb_i=d$. In other words,
\begin{equation}\label{eq:optimizationProblem}
\min_{\bm{x}\in \mathbb{R}^{nN}} f(\bm{x}), \text{ s.t. } (1_N^T\otimes I_n)\bm{x} = d,
\end{equation}
where $\bm{x} \triangleq col\{x_1,...,x_N\}$ and $d\in \mathbb{R}^n$.

The following assumption is adopted to ensure the well-posedness of \eqref{eq:optimizationProblem}, which is widely used.

\begin{assumption}\label{assum:1}
~
\begin{enumerate}[1)]
\item $f(\bm{x})$ is $c_0$-strongly convex and twice continuously differentiable.
\item The interaction graph $\mathcal{G}$ is strongly connected and weight-balanced.
\end{enumerate}
\end{assumption}

The following lemma is quite fundamental for problem \eqref{eq:optimizationProblem}. We present it with its proof here for completeness.

\begin{lemma}\label{lem:optimalPrimalDual}
Under Assumption \ref{assum:1}, there exists a unique optimal solution $\bm{x}^* = col\{x_1^*, ..., x_N^*\}$ of problem \eqref{eq:optimizationProblem}. In addition, there exists a unique $\bm{\lambda}^* = col\{\mu^*, ..., \mu^*\}$ such that the following condition holds.
\begin{equation}\label{eq:optimalSolution}
\left\{\begin{aligned}
0 & = \nabla f(\bm{x}^*) + \bm{\lambda}^*\\
0 & = (1_N^T\otimes I_n)\bm{x}^* -d
\end{aligned}\right.
\end{equation}
\end{lemma}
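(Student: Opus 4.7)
The plan is to split the statement into two separate claims — existence/uniqueness of the primal optimum $\bm{x}^*$, and then existence/uniqueness of the ``consensus'' multiplier $\bm{\lambda}^* = col\{\mu^*,\dots,\mu^*\}$ — and handle them in that order, since the KKT argument for the multiplier will rely on $\bm{x}^*$ already being pinned down.

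For the primal part, I would invoke Assumption \ref{assum:1}: $f$ is $c_0$-strongly convex and continuously differentiable, so by the inequality \eqref{eq:stronglyConvex} it is level bounded, as noted in the preliminaries. The feasible set $\mathcal{F} = \{\bm{x}\in\mathbb{R}^{nN} : (1_N^T\otimes I_n)\bm{x} = d\}$ is a nonempty (it contains $(1/N)1_N\otimes d$), closed, and convex affine subspace. Restricting the continuous, level-bounded, strictly convex $f$ to this closed set, the infimum is attained (a minimizing sequence is bounded and hence has a cluster point in $\mathcal{F}$), and strict convexity rules out a second minimizer, giving a unique $\bm{x}^*$.

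For the multiplier part, the constraint is purely linear, so the Lagrangian KKT conditions are necessary and sufficient with no need for a constraint qualification. Writing the multiplier as $\mu \in \mathbb{R}^n$ (matching the dimension of the single vector equality constraint) and the constraint matrix as $A = 1_N^T\otimes I_n$, stationarity of the Lagrangian gives $\nabla f(\bm{x}^*) + A^T\mu^* = 0$. Since $A^T = 1_N\otimes I_n$, the product $A^T\mu^*$ is exactly $col\{\mu^*,\dots,\mu^*\}$, which is the required $\bm{\lambda}^*$, and feasibility supplies the second line of \eqref{eq:optimalSolution}. Uniqueness of $\mu^*$ follows because $1_N\otimes I_n$ has full column rank $n$, so $\mu^*$ is uniquely determined by the already-unique $\nabla f(\bm{x}^*)$.

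The only subtle point — more bookkeeping than a real obstacle — is justifying that $\bm{\lambda}^*$ \emph{must} have the consensus block structure $col\{\mu^*,\dots,\mu^*\}$ rather than being an arbitrary vector in $\mathbb{R}^{nN}$. This is not an extra assumption but a direct consequence of the Kronecker structure of the constraint: the problem has only $n$ scalar equalities bundled as $(1_N^T\otimes I_n)\bm{x} = d$, so the Lagrange multiplier lives in $\mathbb{R}^n$ and gets ``broadcast'' to all $N$ blocks through $A^T = 1_N\otimes I_n$. Making this identification transparent is the one place where the write-up needs care.
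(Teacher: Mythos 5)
Your proposal is correct and follows essentially the same route as the paper: existence via level boundedness of the strongly convex $f$ over the affine feasible set, uniqueness via strict convexity, and the multiplier from the first-order optimality condition for the linear constraint. The paper phrases the multiplier step as $-\nabla f(\bm{x}^*)\in\mathcal{N}_{\Omega}(\bm{x}^*)$ with $\mathcal{N}_{\Omega}(\bm{x}^*)=\{1_N\otimes\mu\}$, which is exactly your KKT stationarity $\nabla f(\bm{x}^*)+(1_N\otimes I_n)\mu^*=0$ in normal-cone language, and your full-column-rank argument for the uniqueness of $\mu^*$ matches the paper's implicit reasoning.
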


\begin{proof}
Since $f$ is strongly convex and differentiable, it is level bounded, which implies the existence of an optimal point over the set $\Omega = \{\bm{x}\in \mathbb{R}^{nN}\,|\,(1_N^T\otimes I_n)\bm{x} -d = 0\}$. Also, the strong convexity of $f$ implies the uniqueness of the optimal point $\bm{x}^*$.  Since the normal cone of $\Omega$ at point $\bm{x}^*$ is $\mathcal{N}_{\Omega}(\bm{x}^*) = \{1_N\otimes \mu\,|\,\mu\in\mathbb{R}^n\}$, the conclusion follows from the necessary optimality condition $-\nabla f(\bm{x}^*) \in \mathcal{N}_{\Omega}(\bm{x}^*)$ \cite[Theorem 6.12, page 207]{Rockafellar1998Variational}.
\end{proof}

The goal of this paper is to design a distributed sub-optimal algorithm with a positive adjustable parameter $\varepsilon$ for problem \eqref{eq:optimizationProblem}, such that
\begin{enumerate}[1)]
\item the equilibrium point of the proposed algorithm is exponentially stable with the resource allocation constraint held;
\item it approaches the optimal solution of problem \eqref{eq:optimizationProblem} as $\varepsilon \to 0$, and the difference between it and the optimal solution is bounded linearly by $\varepsilon$.
\end{enumerate}
Of course, the design of sub-optimal algorithm should be simpler than that for optimization algorithms.

\section{Distributed algorithm design}

In this section, we propose a distributed sub-optimal algorithm, and also show the relationship between its design and singular perturbation analysis.

To make a comparison, we first introduce a distributed algorithm over undirected graphs for problem \eqref{eq:optimizationProblem}, obtained in the literature, such as \cite{Yi2016Initialization}:
\begin{equation}\label{eq:algorithmPI}
\forall\, i\in \mathcal{V}, \, \left\{\begin{aligned}
\dot{x}_i & = - \nabla f_i(x_i) - \lambda_i\\
\dot{\lambda}_i & = - k_P\sum_{j=1}^Na_{ij}(\lambda_i-\lambda_j) \\
&\quad - k_I\sum_{j=1}^Na_{ij}(z_i-z_j) + x_i - b_i\\
\dot{z}_i & = \sum_{j=1}^Na_{ij}(\lambda_i-\lambda_j)
\end{aligned}\right.
\end{equation}
where $\sum_{i=1}^N b_i= d$ and $k_P=k_I=1$ in \cite{Yi2016Initialization}. The continuous-time algorithm \eqref{eq:algorithmPI} is constructed by combining the Lagrangian duality and the consensus dynamics. Roughly speaking, the dynamics of $x_i$'s correspond to the gradient decent and the dynamics of $\lambda_i$'s and $z_i$'s render the local Lagrangian multipliers $\lambda_i$ to reach a consensus at the optimal point of the dual problem.

On the other hand, as pointed out in \cite{Gharesifard2014Distributed,Gharesifard2016Price}, the continuous-time algorithms like \eqref{eq:algorithmPI} may become divergent over some directed graphs. One remedy is to tune the parameters $k_P$ and $k_I$ to stabilize the algorithm dynamics over a balanced graph, which was indeed used in \cite{Gharesifard2014Distributed,Cherukuri2016Initialization}. However, since that stabilization is based on the eigenvalues of the Laplacian of the balanced graph, whose information is not local, the algorithm is not fully distributed or its design increases the computational cost.

In this paper, we propose a simple distributed algorithm for problem \eqref{eq:optimizationProblem} without the knowledge of the eigenvalues associated with the considered balanced graph:
\begin{equation}\label{eq:algorithmNew}
\forall\, i\in \mathcal{V}, \, \left\{\begin{aligned}
\dot{x}_i & = - \nabla f_i(x_i) - \lambda_i\\
\varepsilon\dot{\lambda}_i & = - \sum_{j=1}^Na_{ij}(\lambda_i-\lambda_j) + \varepsilon (x_i - b_i)
\end{aligned}\right.
\end{equation}
where $\varepsilon > 0$ is a small adjustable parameter. For simplicity, we rewrite algorithm \eqref{eq:algorithmNew} in a compact form as
\begin{equation}\label{eq:algorithmCompact}
\left\{\begin{aligned}
\dot{\bm{x}} & = -\nabla f(\bm{x}) - \bm{\lambda}\\
\varepsilon\dot{\bm{\lambda}} & = - \bm{L}\bm{\lambda} + \varepsilon(\bm{x} - \bm{b})
\end{aligned}\right.
\end{equation}
where $\bm{\lambda} = col\{\lambda_1,...,\lambda_N\}, \bm{b} = col\{b_1,...,b_N\}$ and $\bm{L} = L\otimes I_n$, $L$ is the Laplacian matrix of the strongly connected and weight-balanced graph.

\begin{remark}
Algorithm \eqref{eq:algorithmNew} has lower dimensions and less (communication) complexity than \eqref{eq:algorithmPI}, because it does not involve the dynamics of $z_i$'s and related information exchanging.
\end{remark}

Since $\bm{L}$ is generally asymmetric, \eqref{eq:algorithmCompact} loses any interpretation from gradient-decent-gradient-ascent dynamics for the saddle-point computation, which is widely used for constrained convex optimization. In fact, our design is based on singular perturbation ideas as follows.
Clearly, we can choose a matrix $T\in \mathbb{R}^{N\times N}$ satisfying
\begin{equation}
T = [1_N, M_1]^T, \quad T^{-1} = [1_N, M_2].
\end{equation}
Let $[\begin{smallmatrix}
\mu\\
\bm{\theta}
\end{smallmatrix}] \triangleq (T\otimes I_n) \bm{\lambda}$, where $\mu \in \mathbb{R}^{n}, \bm{\theta} \in \mathbb{R}^{n(N-1)}$. Then \eqref{eq:algorithmCompact} can be written as a standard singular perturbation model as follows:
\begin{equation}\label{eq:singularPerturbation}
\left\{\begin{aligned}
\dot{\bm{x}} & = -\nabla f(\bm{x}) - (1_N\otimes I_n)\mu - (M_2\otimes I_n)\bm{\theta}\\
\dot{\mu} & = (1_N^T\otimes I_n)\bm{x} - d\\
\varepsilon\dot{\bm{\theta}} & = - (M_1^TLM_2\otimes I_n) \bm{\theta} + \varepsilon (M_1^T\otimes I_n) (\bm{x} - \bm{b})
\end{aligned}\right.
\end{equation}
It can be observed from \eqref{eq:singularPerturbation} that, for a sufficiently small $\varepsilon >0$, $\bm{\theta}$ corresponds to the fast transient part and $(\bm{x}, \mu)$ corresponds to the slow part. Because all the eigenvalues of matrix $- (M_1^TLM_2\otimes I_n)$ are negative, the fast manifold is simply $\bm{\theta} = 0$, and then the quasi-steady-state model (or reduced model) of \eqref{eq:singularPerturbation} is
\begin{equation}\label{eq:quasiSteady}
\left\{\begin{aligned}
\dot{\bm{x}} & = -\nabla f(\bm{x}) - (1_N\otimes I_n)\mu\\
\dot{\mu} & = (1_N^T\otimes I_n)\bm{x} - d\\
\end{aligned}\right.
\end{equation}
Let us denote the solution of \eqref{eq:quasiSteady} by $(\tilde{\bm{x}}(t), \tilde{\mu}(t))$ and the solution of \eqref{eq:singularPerturbation} by $(\bm{x}(t,\varepsilon), \mu(t,\varepsilon), \bm{\theta}(t,\varepsilon))$.  With the existing singular perturbation results \cite[Theorems 11.2 and 11.3, pages 439 and 452]{Khalil2002Nonlinear},  \eqref{eq:singularPerturbation} is asymptotically stable and, for any $\varepsilon \in (0, \varepsilon^*)$ with some $\varepsilon^*>0$, an initial moment $t_0$ and some time $t_b>t_0$, we have
\begin{equation}\label{eq:singularPerturbationResult}
\begin{aligned}
(\bm{x}(t,\varepsilon), \mu(t,\varepsilon))- (\tilde{\bm{x}}(t),\tilde{\mu}(t)) & = O(\varepsilon), \, t\in [t_0,\infty)\\
\bm{\theta}(t,\varepsilon) - 0 & = O(\varepsilon), \, t\in [t_b,\infty)
\end{aligned}
\end{equation}

To sum up, we have the following statements from singular perturbation analysis.
\begin{enumerate}[1)]
\item The algorithm \eqref{eq:algorithmCompact} has its quasi-steady-state model as \eqref{eq:quasiSteady}, and \eqref{eq:quasiSteady} is exactly the primal-dual optimization algorithm for problem \eqref{eq:optimizationProblem}. However, in contrast to \eqref{eq:algorithmCompact}, the algorithm \eqref{eq:quasiSteady} is not directly implementable in a fully distributed manner because the dynamics of $\mu$ needs to collect all the information of $x_1, ..., x_N$ due to the coupled resource allocation constraint.

\item  The trajectory of algorithm \eqref{eq:algorithmCompact} is near the (centralized) primal-dual one within an error bound estimation $O(\varepsilon)$. Moreover, since $(\tilde{\bm{x}}(t),\tilde{\mu}(t))$ converges to the optimal primal-dual solution $(\bm{x}^*, \mu^*)$ in Lemma \ref{lem:optimalPrimalDual}, \eqref{eq:algorithmCompact} approaches a ball centered at the optimal point as $t\to \infty$, yielding some sub-optimal solution.
\end{enumerate}
Note that for our algorithm, we have to verify the existence of its equilibrium, which is not straightforward.  Moreover, the estimation $O(\varepsilon)$ in the singular perturbation theory may be too rough since it holds for all $t\in [t_0,\infty)$. In order to clarify the effectiveness of our method, we have to find a new way for the algorithm analysis. To be specific, we will first study the existence of the equilibrium of the algorithm \eqref{eq:algorithmCompact}, and then study its convergence and sub-optimality, in the sequel.

\section{Main Results}
In this section, we analyze the equilibrium, convergence and sub-optimality for the algorithm \eqref{eq:algorithmCompact}.

\subsection{Equilibrium analysis}
Here let us show the existence and uniqueness of the equilibrium of algorithm \eqref{eq:algorithmCompact}.

\begin{theorem}\label{thm:1}
Under Assumption \ref{assum:1}, there exists $\varepsilon_0>0$ such that for any fixed $\varepsilon \in (0, \, \varepsilon_0)$, algorithm \eqref{eq:algorithmCompact} has a unique equilibrium, i.e., a unique pair $(\bar{\bm{x}}(\varepsilon), \bar{\bm{\lambda}}(\varepsilon))$ satisfying the following equation
\begin{equation}\label{eq:equilibriumCompact}
\left\{\begin{aligned}
0 &= \nabla f(\bm{x}) + \bm{\lambda}\\
0 &= -\varepsilon (\bm{x} - \bm{b}) + \bm{L}\bm{\lambda}
\end{aligned}\right.
\end{equation}
\end{theorem}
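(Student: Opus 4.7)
The plan is to prove uniqueness and existence separately: uniqueness will follow for every $\varepsilon>0$ from a short monotonicity identity built on the strong convexity of $f$ and the weight-balance of $\bm{L}$, while existence need only be obtained for small $\varepsilon$ and is most cleanly reached through the implicit function theorem applied to the reduced model that the singular-perturbation decomposition in \eqref{eq:singularPerturbation} already suggests.

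For uniqueness, suppose $(\bar{\bm{x}}_1,\bar{\bm{\lambda}}_1)$ and $(\bar{\bm{x}}_2,\bar{\bm{\lambda}}_2)$ both satisfy \eqref{eq:equilibriumCompact}, and write $\Delta\bm{x}$, $\Delta\bm{\lambda}$ for their differences. Subtracting the first equation and pairing with $\Delta\bm{x}$ yields, via the $c_0$-strong convexity inequality \eqref{eq:stronglyConvex},
\[
\Delta\bm{x}^T\Delta\bm{\lambda} = -\Delta\bm{x}^T\bigl(\nabla f(\bar{\bm{x}}_1)-\nabla f(\bar{\bm{x}}_2)\bigr) \le -c_0\|\Delta\bm{x}\|^2.
\]
Subtracting the second equation gives $\Delta\bm{x}=\frac{1}{\varepsilon}\bm{L}\Delta\bm{\lambda}$, so that
\[
\Delta\bm{x}^T\Delta\bm{\lambda} = \frac{1}{2\varepsilon}\Delta\bm{\lambda}^T(\bm{L}+\bm{L}^T)\Delta\bm{\lambda} \ge 0,
\]
since $L+L^T$ (hence $\bm{L}+\bm{L}^T$) is positive semidefinite by part 3 of Lemma 1. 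Comparing the two bounds forces $\Delta\bm{x}=0$, and then the first equation of \eqref{eq:equilibriumCompact} gives $\Delta\bm{\lambda}=0$.

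For existence I use the change of variables $\bm{\lambda}=(1_N\otimes\mu)+(M_2\otimes I_n)\bm{\theta}$ of \eqref{eq:singularPerturbation}. Exploiting $1_N^TL=0$ and the invertibility of $M_1^TLM_2$, \eqref{eq:equilibriumCompact} is equivalent to the algebraic system
\[
\begin{aligned}
&\nabla f(\bm{x})+(1_N\otimes\mu)+\varepsilon(K\otimes I_n)(\bm{x}-\bm{b})=0,\\
&(1_N^T\otimes I_n)\bm{x}-d=0,
\end{aligned}
\]
with $K:=M_2(M_1^TLM_2)^{-1}M_1^T$, together with the explicit relation $\bm{\theta}=\varepsilon((M_1^TLM_2)^{-1}M_1^T\otimes I_n)(\bm{x}-\bm{b})$. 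At $\varepsilon=0$ this reduced system coincides with \eqref{eq:optimalSolution}, so Lemma~\ref{lem:optimalPrimalDual} supplies the unique root $(\bm{x}^*,\mu^*)$. The Jacobian with respect to $(\bm{x},\mu)$ at $(\bm{x}^*,\mu^*,0)$ is the classical saddle-point matrix
\[
J=\begin{pmatrix}\nabla^2 f(\bm{x}^*) & 1_N\otimes I_n\\ 1_N^T\otimes I_n & 0\end{pmatrix},
\]
which is invertible because $\nabla^2 f(\bm{x}^*)\ge c_0 I$ and $1_N\otimes I_n$ has full column rank. The implicit function theorem then produces $\varepsilon_0>0$ and a smooth branch $(\bar{\bm{x}}(\varepsilon),\bar{\mu}(\varepsilon))$ solving the reduced system on $(-\varepsilon_0,\varepsilon_0)$; reassembling $\bar{\bm{\lambda}}(\varepsilon)=(1_N\otimes\bar{\mu}(\varepsilon))+(M_2\otimes I_n)\bar{\bm{\theta}}(\varepsilon)$ gives an equilibrium of \eqref{eq:algorithmCompact} for every $\varepsilon\in(0,\varepsilon_0)$, and the uniqueness just proved guarantees it is the only one.

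The main obstacle is the singular character of \eqref{eq:equilibriumCompact} at $\varepsilon=0$: there the second equation only says $\bm{\lambda}\in\ker\bm{L}$, so the implicit function theorem cannot be applied directly to the original variables. The remedy is to decompose $\bm{\lambda}$ along $\ker\bm{L}$ and its complement first --- which is legitimate because the matrix $T$ is invertible and $M_1^TLM_2$ is nonsingular --- making the fast coordinate $\bm{\theta}$ a smooth function of $\bm{x}$ and collapsing the remaining problem onto the non-degenerate KKT system of Lemma~\ref{lem:optimalPrimalDual}, where IFT does apply.
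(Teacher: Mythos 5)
Your proof is correct, and the existence half takes a genuinely different route from the paper. The uniqueness argument is essentially identical to the paper's: the same monotonicity identity combining strong convexity of $f$ with positive semidefiniteness of $\bm{L}+\bm{L}^T$, valid for every $\varepsilon>0$. For existence, however, the paper eliminates $\bm{\lambda}$, Taylor-expands $\nabla f$ about $\bm{x}^*$ to get a fixed-point equation $\bm{z}=\Phi(\bm{z},\varepsilon)$, proves a bound on $\|(\varepsilon I_{nN}+\bm{L}H)^{-1}\bm{L}\|$ that is uniform in $\varepsilon$ (via a spectral-radius argument), and invokes the Contraction Mapping Theorem on a small ball; you instead pass to the coordinates $(\mu,\bm{\theta})$ of the singular-perturbation decomposition, solve for $\bm{\theta}$ explicitly using the invertibility of $M_1^TLM_2$, and apply the implicit function theorem to the resulting non-degenerate KKT system, whose Jacobian at $(\bm{x}^*,\mu^*,0)$ is the standard invertible saddle-point matrix. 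Your route is shorter and conceptually cleaner --- it makes the degeneracy at $\varepsilon=0$ visible and removes it by the same splitting already used to derive \eqref{eq:quasiSteady} --- and local uniqueness from the IFT is correctly upgraded to global uniqueness by your separate monotonicity argument. What the paper's heavier fixed-point construction buys is the explicit constants $k_1$, $\delta$, $\bm{\lambda}_0$, which are reused verbatim in Theorem~\ref{thm:3} to obtain the quantitative bound $\|\bar{\bm{x}}(\varepsilon)-\bm{x}^*\|\le k_1(k_1+1)\|\bm{\lambda}_0\|\varepsilon$; your branch is differentiable at $\varepsilon=0$, so it also gives $\bar{\bm{x}}(\varepsilon)-\bm{x}^*=O(\varepsilon)$, but with an implicit constant that would have to be extracted from $\tfrac{d}{d\varepsilon}\bar{\bm{x}}(0)=-J^{-1}\partial_\varepsilon G$ to recover the explicit rate. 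Two small presentational points, neither a gap: the stated ``equivalence'' of \eqref{eq:equilibriumCompact} with your reduced system holds only for $\varepsilon\neq 0$ (the $1_N^T$-block of the second equation degenerates at $\varepsilon=0$), though the direction you actually use --- reassembling a reduced-system solution into an equilibrium for $\varepsilon>0$ --- is valid because the constraint $(1_N^T\otimes I_n)\bm{x}=d$ supplies exactly the missing block; and you silently use $L1_N=0$ alongside $1_N^TL=0$ when killing the $\mu$-component in $M_1^TL\bm{\lambda}$, which is true for any Laplacian but worth stating.
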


\begin{proof}  We first show the existence and then the uniqueness in the proof.

(i) Existence: Let $(\bm{x}^*, \bm{\lambda}^*)$ be the optimal solution pair in \eqref{eq:optimalSolution}. Since $f(\bm{x})$ is twice continuously differentiable,
\begin{equation}\label{eq:rz}
\nabla f(\bm{x}) = \nabla f(\bm{x}^*) + H \bm{z} - \bm{r}(\bm{z}),
\end{equation}
where
\begin{equation}
H \triangleq \nabla^2f(\bm{x}^*), \quad \bm{z} \triangleq \bm{x} - \bm{x}^*,
\end{equation}
and $\bm{r}(\bm{z})$ is an infinitesimal term with respect to $\bm{z}$.

Clearly, it follows from \eqref{eq:optimalSolution} that $\bm{L}\bm{\lambda}^* = 0$ and $\bm{L}\nabla f(\bm{x}^*) = -\bm{L}\bm{\lambda}^* = 0$. Moreover, since $(1_N^T\otimes I_n)(\bm{b} - \bm{x}^*) = 0$, there exists $\bm{\lambda}_0$ such that $\bm{L}\bm{\lambda}_0 = \bm{b} - \bm{x}^*$. Thus, by eliminating $\bm{\lambda}$ in \eqref{eq:equilibriumCompact}, we obtain an equation with respect to variable $\bm{z}$ as
\begin{equation}\label{eq:Phi}
\begin{aligned}
\bm{z} = \Phi(\bm{z}, \varepsilon) & \triangleq (\varepsilon I_{nN} + \bm{L}H)^{-1}(\varepsilon(\bm{b} - \bm{x}^*) + \bm{L}\bm{r}(\bm{z}))\\
& = (\varepsilon I_{nN} + \bm{L}H)^{-1}\bm{L}(\varepsilon\bm{\lambda}_0 + \bm{r}(\bm{z}))
\end{aligned}
\end{equation}
We claim that matrix $\varepsilon I_{nN} + \bm{L}H$ is nonsingular (and then the map $\Phi(\bm{z},\varepsilon)$ in \eqref{eq:Phi} is well-defined). In fact, $H \geq c_0I_{nN}$ and $\bm{L}+ \bm{L}^T$ is positive semidefinite according to Assumption \ref{assum:1}. Then $v^TH^{\frac{1}{2}}\bm{L}H^{\frac{1}{2}}v = v^TH^{\frac{1}{2}}(\bm{L}+\bm{L}^T)H^{\frac{1}{2}}v \geq 0, \,\forall\, v \in \mathbb{R}^{nN}$.  Due to $\det(sI_{nN} - \bm{L}H) = \det(sI_{nN} - H^{\frac{1}{2}}\bm{L}H^{\frac{1}{2}})$, all the eigenvalues of matrix $\bm{L}H$ are nonnegative. Consequently, matrix $\varepsilon I_{nN} + \bm{L}H$ is nonsingular.

Moreover, since $ (\varepsilon I_{nN} + \bm{L}H)^{-1}(\varepsilon I_{nN} + \bm{L}H) = I_{nN}$,
\begin{equation}
\begin{aligned}
(\varepsilon I_{nN} + \bm{L}H)^{-1}\bm{L} & = H^{-1} -  (H + \varepsilon^{-1}H\bm{L}H)^{-1}.
\end{aligned}
\end{equation}
Note that $\|\nu\|^2 = \eta\nu^T(H + \varepsilon^{-1}H\bm{L}H)\nu \geq \eta c_{0} \|\nu\|^2$ for any eigenvalue $\eta$ of matrix $(H + \varepsilon^{-1}H\bm{L}H)^{-1}$ with corresponding eigenvector $\nu \neq 0$. Hence, the spectral radius $\rho$ of matrix $(H + \varepsilon^{-1}H\bm{L}H)^{-1}$ satisfies
\begin{equation}
\rho((H + \varepsilon^{-1}H\bm{L}H)^{-1})\leq c_0^{-1}, \, \forall\, \varepsilon >0.
\end{equation}
Then, recalling \cite[Lemma 5.6.10, page 347]{Horn2013Matrix}, there exists a matrix norm $\|\cdot\|_{\sharp}$ such that
\begin{equation}
\|(H + \varepsilon^{-1}H\bm{L}H)^{-1}\|_{\sharp} \leq c_0^{-1} +1, \, \forall\, \varepsilon >0.
\end{equation}
It follows from the equivalence of matrix norms that there exists a constant $k_0>0$ such that
\begin{equation}
\|(H + \varepsilon^{-1}H\bm{L}H)^{-1}\| \leq k_0(c_0^{-1} +1), \, \forall\, \varepsilon >0.
\end{equation}
Therefore, for any $\varepsilon >0$,
\begin{equation}\label{eq:k1}
\begin{aligned}
\|(\varepsilon I_{nN} + \bm{L}H)^{-1}\bm{L}\| & \leq \|H^{-1}\| + \|(H + \varepsilon^{-1}H\bm{L}H)^{-1}\|\\
& \leq (k_0+1)c_0^{-1} + k_0 \triangleq k_1.
\end{aligned}
\end{equation}
Additionally, for $\bm{r}(\bm{z})$ in \eqref{eq:rz} and $k_1$ in \eqref{eq:k1}, there exists $\delta = \delta(k_1)>0$ such that
\begin{equation}\label{eq:rz2}
\|\bm{r}(\bm{z}) - \bm{r}(\bm{z}')\|\leq \frac{1}{k_1+1}\|\bm{z} - \bm{z}'\|, \, \forall\, \bm{z},\bm{z}' \in \delta\mathbb{B}.
\end{equation}
Furthermore, for the constants $k_1 >0, \delta>0$ and $\bm{\lambda}_0$ in \eqref{eq:Phi}, there exists $\varepsilon_0  = \varepsilon_0(k_1, \delta, \bm{\lambda}_0)>0$ such that
\begin{equation}\label{eq:epsilone0}
\varepsilon \|\bm{\lambda}_0\| \leq \frac{\delta}{k_1(k_1 +1)}, \,\forall\, \varepsilon \in (0,\,\varepsilon_0).
\end{equation}

Consider the map $\Phi(\bm{z}, \varepsilon)$ in \eqref{eq:Phi}. On the one hand, it follows from \eqref{eq:k1} and \eqref{eq:rz2} that
\begin{equation}\label{eq:contraction}
\|\Phi(\bm{z}, \varepsilon) - \Phi(\bm{z}', \varepsilon)\|\leq \frac{k_1}{k_1+1}\|\bm{z}-\bm{z}'\|, \, \forall\, \bm{z},\bm{z}' \in \delta\mathbb{B}
\end{equation}
for any fixed $\varepsilon >0$, that is, $\Phi(\cdot,\varepsilon)$ is a contraction map in $\delta\mathbb{B}$. On the other hand, it follows from \eqref{eq:rz2} and \eqref{eq:epsilone0} that
\begin{equation}\label{eq:maptoitself}
\|\Phi(\bm{z},\varepsilon)\|\leq k_1\varepsilon \|\bm{\lambda}_0\| + k_1\|\bm{r}(\bm{z})\|\leq \delta, \,\forall\, \bm{z}\in \delta\mathbb{B}
\end{equation}
for any $\varepsilon \in (0,\,\varepsilon_0)$, that is, $\Phi(\cdot,\varepsilon)$ maps the compact set $\delta\mathbb{B}$ into itself.   According to the Contraction Mapping Theorem \cite[page 458]{Bertsekas2015Convex}, $\Phi(\cdot,\varepsilon)$ has a fixed point $\bar{\bm{z}}(\varepsilon)$, which is the solution of equation \eqref{eq:Phi}. Let
\begin{equation}
\bar{\bm{x}}(\varepsilon) \triangleq \bar{\bm{z}}(\varepsilon) + \bm{x}^*, \quad \bar{\bm{\lambda}}(\varepsilon) \triangleq - \nabla f(\bar{\bm{x}}(\varepsilon)).
\end{equation}
Thus, we obtain that $(\bar{\bm{x}}(\varepsilon), \bar{\bm{\lambda}}(\varepsilon))$ is a solution of equation \eqref{eq:equilibriumCompact}.

(ii) Uniqueness: Suppose there are two solution pairs $(\bm{x}, \bm{\lambda})$ and $(\bm{x}', \bm{\lambda}')$ for equation \eqref{eq:equilibriumCompact}. By some calculations, we have
\begin{equation}
\begin{aligned}
0 & = (\bm{x} - \bm{x}')^T(\nabla f(\bm{x}) - \nabla f(\bm{x}') + \bm{\lambda} - \bm{\lambda}') \\
&\quad + (\bm{\lambda} - \bm{\lambda}')^T(-(\bm{x} - \bm{x}') + \varepsilon^{-1}\bm{L}(\bm{\lambda} - \bm{\lambda}'))\\
& = (\bm{x} - \bm{x}')^T(\nabla f(\bm{x}) - \nabla f(\bm{x}')) \\
& \quad + \varepsilon^{-1} (\bm{\lambda} - \bm{\lambda}')^T\bm{L}(\bm{\lambda} - \bm{\lambda}') \geq 0
\end{aligned}
\end{equation}
Then $(\bm{x} - \bm{x}')^T(\nabla f(\bm{x}) - \nabla f(\bm{x}')) = 0$.  Since $f(\bm{x})$ is strongly convex, there must hold $\bm{x}' = \bm{x}$ and $\bm{\lambda}' = \bm{\lambda} = - \nabla f(\bm{x})$, which completes the proof.
\end{proof}

Note that the equilibrium is not known beforehand and the existing singular perturbation techniques do not cover this problem. Instead, we use a fixed-point theorem to prove the existence and then the uniqueness.

\subsection{Convergence and sub-optimality}

Based on the existence of the equilibrium, it is time to study the convergence of the proposed algorithm.

\begin{theorem}\label{thm:2}
Under Assumption \ref{assum:1}, the algorithm \eqref{eq:algorithmCompact} with $\varepsilon \in (0, \, \varepsilon_0)$ converges to its equilibrium point $(\bar{\bm{x}}(\varepsilon), \bar{\bm{\lambda}}(\varepsilon))$. Furthermore, if the gradient map $\nabla f(\bm{x})$ is $\kappa$-Lipshcitz continuous for some constant $\kappa >0$, then \eqref{eq:algorithmCompact} exponentially converges to its equilibrium point.
\end{theorem}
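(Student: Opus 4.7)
The plan is to introduce error coordinates $\tilde{\bm{x}} = \bm{x} - \bar{\bm{x}}(\varepsilon)$ and $\tilde{\bm{\lambda}} = \bm{\lambda} - \bar{\bm{\lambda}}(\varepsilon)$, whose existence is guaranteed by Theorem~\ref{thm:1}. Subtracting \eqref{eq:equilibriumCompact} from \eqref{eq:algorithmCompact} yields the error dynamics
\begin{equation*}
\dot{\tilde{\bm{x}}} = -[\nabla f(\bm{x}) - \nabla f(\bar{\bm{x}})] - \tilde{\bm{\lambda}}, \qquad \varepsilon\dot{\tilde{\bm{\lambda}}} = -\bm{L}\tilde{\bm{\lambda}} + \varepsilon \tilde{\bm{x}},
\end{equation*}
so convergence of \eqref{eq:algorithmCompact} reduces to origin stability of this system. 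I would proceed in two stages: first establish asymptotic convergence via a basic quadratic Lyapunov function together with LaSalle, then upgrade to an exponential rate by augmenting with a cross term tailored to the weight-balanced structure.

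For the asymptotic claim, take $V_0 = \tfrac{1}{2}\|\tilde{\bm{x}}\|^2 + \tfrac{1}{2}\|\tilde{\bm{\lambda}}\|^2$. Using $c_0$-strong convexity of $f$ together with $\tilde{\bm{\lambda}}^T\bm{L}\tilde{\bm{\lambda}} = \tfrac{1}{2}\tilde{\bm{\lambda}}^T(\bm{L}+\bm{L}^T)\tilde{\bm{\lambda}}\geq 0$ (which holds because $\mathcal{G}$ is weight-balanced), the mixed terms $\pm\tilde{\bm{x}}^T\tilde{\bm{\lambda}}$ cancel and one obtains
\begin{equation*}
\dot V_0 \leq -c_0\|\tilde{\bm{x}}\|^2 - \tfrac{1}{2\varepsilon}\tilde{\bm{\lambda}}^T(\bm{L}+\bm{L}^T)\tilde{\bm{\lambda}} \leq 0.
\end{equation*}
Since $\ker(\bm{L}+\bm{L}^T) = \mathrm{span}(1_N)\otimes\mathbb{R}^n$ by strong connectivity, this is only semi-definite. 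Invoking LaSalle's invariance principle, trajectories with $\dot V_0\equiv 0$ must satisfy $\tilde{\bm{x}}\equiv 0$ and $\tilde{\bm{\lambda}}\in\mathrm{span}(1_N)\otimes\mathbb{R}^n$; invariance together with $\dot{\tilde{\bm{x}}} = 0$ then forces $\tilde{\bm{\lambda}} = 0$, giving asymptotic convergence.

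For exponential decay under the Lipschitz hypothesis, the missing damping sits in the consensus direction $\bar w := \tfrac{1}{N}(1_N^T\otimes I_n)\tilde{\bm{\lambda}}$, which is driven only by the average of $\tilde{\bm{x}}$. My plan is to augment $V_0$ by
\begin{equation*}
V := V_0 + \beta\,\tilde{\bm{x}}^T(J\otimes I_n)\tilde{\bm{\lambda}}, \qquad J := \tfrac{1}{N}1_N 1_N^T,
\end{equation*}
for a small $\beta>0$ to be fixed. The crucial observation is that weight-balancedness gives $1_N^T L = 0$, whence $(J\otimes I_n)\bm{L} = 0$; this kills the $1/\varepsilon$ term that would otherwise appear when differentiating the cross piece through $\varepsilon\dot{\tilde{\bm{\lambda}}} = -\bm{L}\tilde{\bm{\lambda}} + \varepsilon\tilde{\bm{x}}$. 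Using $\|\nabla f(\bm{x}) - \nabla f(\bar{\bm{x}})\|\leq\kappa\|\tilde{\bm{x}}\|$ with Young's inequality and the decomposition $\tilde{\bm{\lambda}} = (1_N\otimes I_n)\bar w + \tilde{\bm{\lambda}}_\perp$, one reaches an estimate of the form
\begin{equation*}
\dot V \leq -\gamma_1\|\tilde{\bm{x}}\|^2 - \tfrac{\gamma_2}{\varepsilon}\|\tilde{\bm{\lambda}}_\perp\|^2 - \gamma_3\|\bar w\|^2,
\end{equation*}
with $\gamma_i>0$ provided $\beta$ is small relative to $c_0$, $\kappa$, and the smallest positive eigenvalue of $\tfrac{1}{2}(L+L^T)$. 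The same smallness keeps $V$ equivalent to $\|\tilde{\bm{x}}\|^2+\|\tilde{\bm{\lambda}}\|^2$, so $\dot V\leq -\gamma V$ and exponential convergence follows.

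The main obstacle is calibrating $\beta$ so that three demands are simultaneously met: (i) the $\|\tilde{\bm{x}}\|^2$ terms produced by differentiating the cross piece are absorbed by $-c_0\|\tilde{\bm{x}}\|^2$; (ii) $V$ stays positive definite (the cross term is bounded by $\tfrac{1}{2}(\|\tilde{\bm{x}}\|^2+\|\tilde{\bm{\lambda}}\|^2)$ via Cauchy--Schwarz and $\|J\otimes I_n\|=1$); and (iii) the coefficient in front of $\|\bar w\|^2$ is genuinely negative. Making these compatible uniformly in $\varepsilon\in(0,\varepsilon_0)$ is precisely what the identity $(J\otimes I_n)\bm{L}=0$ buys; without weight-balancedness the cross derivative would carry an $\varepsilon^{-1}$ factor that no finite $\beta$ could tame.
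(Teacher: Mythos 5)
Your proposal is correct, and for the exponential-rate part it takes a genuinely different route from the paper. The asymptotic part coincides with the paper's: the same quadratic Lyapunov function $\|\bm{x}-\bar{\bm{x}}(\varepsilon)\|^2+\|\bm{\lambda}-\bar{\bm{\lambda}}(\varepsilon)\|^2$, the same cancellation of the cross terms, and LaSalle. (In fact you are more careful than the paper here: the paper asserts $\dot V=0$ iff the state equals the equilibrium, whereas $\dot V=0$ only forces $\tilde{\bm{\lambda}}\in\ker(\bm{L}+\bm{L}^T)=\mathrm{span}(1_N)\otimes\mathbb{R}^n$; your extra invariance step $\dot{\tilde{\bm{x}}}=0\Rightarrow\tilde{\bm{\lambda}}=0$ is what actually closes the argument.) For the exponential rate, the paper linearizes at the equilibrium, invokes asymptotic stability of the linearization to obtain a Lyapunov pair $(P,Q)$, bounds the residual $\nabla f(\bm{x})-F(\bm{x})$ via the Lipschitz hypothesis to get $\dot V_1\leq\zeta_2\|\tilde{\bm{x}}\|^2-\zeta_3\|\tilde{\bm{\lambda}}\|^2$, and then forms the combination $V_2=c_0^{-1}(\zeta_2+\zeta_3)V+V_1$. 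You instead add the cross term $\beta\,\tilde{\bm{x}}^T(J\otimes I_n)\tilde{\bm{\lambda}}$ and exploit $1_N^TL=0$ so that $(J\otimes I_n)\bm{L}=0$ annihilates the $\varepsilon^{-1}$ contribution; differentiating then produces $-N\|\bar w\|^2+\tilde{\bm{x}}^T(J\otimes I_n)\tilde{\bm{x}}-[\nabla f(\bm{x})-\nabla f(\bar{\bm{x}})]^T(1_N\otimes\bar w)$, and the Young/Cauchy--Schwarz bookkeeping you describe does close (I checked: $\gamma_1=c_0-\beta(1+\kappa^2/2)$, $\gamma_2$ the algebraic connectivity of $\tfrac12(L+L^T)$, $\gamma_3=\beta N/2$ works, with $\beta<1$ ensuring positive definiteness of $V$, and $\varepsilon<\varepsilon_0$ ensuring the $\varepsilon^{-1}$ coefficient is bounded below). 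What your route buys is an explicit, constructive decay rate in terms of $c_0$, $\kappa$, $N$, $\varepsilon_0$ and the spectrum of $L+L^T$, and it avoids the paper's indirection through the unspecified matrices $P,Q$ of the linearized system; what the paper's route buys is that $P$ handles all cross-couplings at once without having to identify by hand which direction ($\bar w$) lacks damping. Both arguments are global under the global Lipschitz hypothesis. The only points worth making explicit when you write this up are the boundedness of trajectories needed for LaSalle (immediate from radial unboundedness of $V_0$) and the identity $\tilde{\bm{\lambda}}^T(J\otimes I_n)\tilde{\bm{\lambda}}=N\|\bar w\|^2$ used to extract the $-\gamma_3\|\bar w\|^2$ term.
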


\begin{proof}
Since the righthand side of \eqref{eq:algorithmCompact} is locally Lipschitz continuous, there exists a unique trajectory $(\bm{x}(t,\varepsilon), \bm{\lambda}(t, \varepsilon))$ satisfying \eqref{eq:algorithmCompact}.   Take the following Lyapunov function
\begin{equation}
V(\bm{x}, \bm{\lambda}) \triangleq \|\bm{x} - \bar{\bm{x}}(\varepsilon)\|^2 + \|\bm{\lambda} - \bar{\bm{\lambda}}(\varepsilon)\|^2.
\end{equation}
Then $V$ is positive definite and its first order derivative with respect to time $t$ is
\begin{equation}
\begin{aligned}
\dot{V}(\bm{x}, \bm{\lambda}) & = - (\bm{x} - \bar{\bm{x}}(\varepsilon))^T (\nabla f(\bm{x}) + \bm{\lambda}) \\
& \quad - (\bm{\lambda} - \bar{\bm{\lambda}}(\varepsilon))^T (\bm{b}-\bm{x} + \varepsilon^{-1}\bm{L}\bm{\lambda})\\
& = - (\bm{x} - \bar{\bm{x}} (\varepsilon))^T (\nabla f(\bm{x}) - \nabla f(\bar{\bm{x}} (\varepsilon)))\\
& \quad - \varepsilon^{-1}(\bm{\lambda} - \bar{\bm{\lambda}}(\varepsilon))^T\bm{L}(\bm{\lambda} - \bar{\bm{\lambda}}(\varepsilon))\\
& \leq - c_0 \|\bm{x} - \bar{\bm{x}} (\varepsilon)\|^2 \\
& \quad - \varepsilon^{-1}(\bm{\lambda} - \bar{\bm{\lambda}}(\varepsilon))^T(\bm{L} + \bm{L}^T)(\bm{\lambda} - \bar{\bm{\lambda}}(\varepsilon))\\
& \leq 0.
\end{aligned}
\end{equation}
Also, we have that $\dot{V}(\bm{x}, \bm{\lambda}) = 0$ if and only if $\bm{x} = \bar{\bm{x}}(\varepsilon)$ and $\bm{\lambda} = \bar{\bm{\lambda}}(\varepsilon)$.  By the Invariance Principle \cite[page 126]{Khalil2002Nonlinear}, algorithm \eqref{eq:algorithmCompact} converges to $(\bar{\bm{x}}(\varepsilon), \bar{\bm{\lambda}}(\varepsilon))$.

Moreover, a linearized system of algorithm \eqref{eq:algorithmCompact} at its equilibrium can be obtained via replacing the term $\nabla f(\bm{x})$ by an affine map $F(\bm{x})$ defined as
\begin{equation}\label{eq:Fx}
F(\bm{x}) \triangleq \nabla f(\bar{\bm{x}}(\varepsilon)) + \nabla^2 f(\bar{\bm{x}}(\varepsilon))(\bm{x} - \bar{\bm{x}}(\varepsilon)).
\end{equation}
Following the same proof as above, this linear system is asymptotically stable.
%
Then there exist two positive definite matrices $P, Q \in \mathbb{R}^{2nN \times 2nN}$ such that
\begin{equation}
\begin{aligned}
V_1(\bm{x}, \bm{\lambda}) & \triangleq \begin{bmatrix}
\bm{x} - \bar{\bm{x}}(\varepsilon)\\
\bm{\lambda} - \bar{\bm{\lambda}}(\varepsilon)
\end{bmatrix}^T P \begin{bmatrix}
\bm{x} - \bar{\bm{x}}(\varepsilon)\\
\bm{\lambda} - \bar{\bm{\lambda}}(\varepsilon)
\end{bmatrix}\\
& \geq \zeta_1(\|\bm{x} - \bar{\bm{x}}(\varepsilon)\|^2 + \|\bm{\lambda} - \bar{\bm{\lambda}}(\varepsilon)\|^2)
\end{aligned}
\end{equation}
for some $\zeta_1>0$ and
\begin{equation}
\begin{aligned}
\dot{V}_1(\bm{x}, \bm{\lambda}) & = - \begin{bmatrix}
\bm{x} - \bar{\bm{x}}(\varepsilon)\\
\bm{\lambda} - \bar{\bm{\lambda}}(\varepsilon)
\end{bmatrix}^T Q \begin{bmatrix}
\bm{x} - \bar{\bm{x}}(\varepsilon)\\
\bm{\lambda} - \bar{\bm{\lambda}}(\varepsilon)
\end{bmatrix}\\
& \quad - 2 \begin{bmatrix}
\bm{x} - \bar{\bm{x}}(\varepsilon)\\
\bm{\lambda} - \bar{\bm{\lambda}}(\varepsilon)
\end{bmatrix}^T P \begin{bmatrix}
\nabla f(\bm{x}) - F(\bm{x})\\
0
\end{bmatrix}
\end{aligned}
\end{equation}
where $F(\bm{x})$ is in \eqref{eq:Fx}.

If $\nabla f(\bm{x})$ is $\kappa$-Lipschitz continuous, then there are positive constants $\zeta_2$ and $\zeta_3>0$ such that
\begin{equation}
\dot{V}_1(\bm{x}, \bm{\lambda}) \leq \zeta_2\|\bm{x} - \bar{\bm{x}}(\varepsilon)\|^2 - \zeta_3\|\bm{\lambda} - \bar{\bm{\lambda}}(\varepsilon)\|^2.
\end{equation}
Define a new Lyapunov function as
\begin{equation}
V_2(\bm{x}, \bm{\lambda}) \triangleq c_0^{-1}(\zeta_2 + \zeta_3)V(\bm{x}, \bm{\lambda}) + V_1(\bm{x}, \bm{\lambda})
\end{equation}
Then
\begin{equation}
V_2(\bm{x}, \bm{\lambda}) \geq (c_0^{-1}(\zeta_2 + \zeta_3) + \zeta_1)(\|\bm{x} - \bar{\bm{x}}(\varepsilon)\|^2 + \|\bm{\lambda} - \bar{\bm{\lambda}}(\varepsilon)\|^2),
\end{equation}
and
\begin{equation}
\dot{V}_2(\bm{x}, \bm{\lambda}) \leq - \zeta_3 (\|\bm{x} - \bar{\bm{x}}(\varepsilon)\|^2 + \|\bm{\lambda} - \bar{\bm{\lambda}}(\varepsilon)\|^2)
\end{equation}
Thus, the algorithm is globally exponentially convergent with the exponential rate no more than $-\frac{\zeta_3c_0}{\zeta_1c_0 +\zeta_2 + \zeta_3}$, which implies the conclusion.
\end{proof}

\begin{remark}
The obtained result about the exponential rate is consistent with some existing ones for undirected graphs such as \cite[Theorem 4.3]{Yi2016Initialization}, but our algorithm is of lower dimensional dynamics and also applicable to balanced directed graphs.
\end{remark}

Next, we need to verify the sub-optimality of the algorithm \eqref{eq:algorithmCompact} and check the difference between the sub-optimal solution and the optimal one.

\begin{theorem}\label{thm:3}
The equilibrium $(\bar{\bm{x}}(\varepsilon), \bar{\bm{\lambda}}(\varepsilon))$ of algorithm \eqref{eq:algorithmCompact} is a sub-optimal solution of problem \eqref{eq:optimizationProblem} in the sense that
\begin{equation}\label{eq:resouceAllocationConstraint}
(1^T_{N}\otimes I_n) \bar{\bm{x}}(\varepsilon) = d,
\end{equation}
and
\begin{equation}\label{eq:solutionLimit}
\lim_{\varepsilon \to 0} (\bar{\bm{x}}(\varepsilon), \bar{\bm{\lambda}}(\varepsilon)) = (\bm{x}^*, \bm{\lambda}^*).
\end{equation}
Moreover, for any $\varepsilon \in (0,\varepsilon_0)$, there hold
\begin{equation}\label{eq:LipchitzContinuous}
\|\bar{\bm{x}}(\varepsilon) -  \bm{x}^*\| \leq \gamma_1 \varepsilon, \quad \|\bar{\bm{\lambda}}(\varepsilon) -  \bm{\lambda}^*\| \leq \gamma_2 \varepsilon,
\end{equation}
where $\gamma_1 \triangleq  k_1(k_1+1)\|\bm{\lambda}_0\|, \gamma_2 \triangleq \gamma_1 \sup_{\|\bm{z}\|\leq \delta}\{\|\nabla^2 f(\bm{z})\|\}$, and $\varepsilon_0, k_1, \bm{\lambda}_0, \delta$ are in the proof of Theorem \ref{thm:1}.
\end{theorem}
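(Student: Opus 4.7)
The plan is to handle the three claims in sequence: the constraint feasibility, the quantitative bound, and then the limit.

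First I would verify the resource allocation constraint (\ref{eq:resouceAllocationConstraint}). Multiply the second line of the equilibrium equation (\ref{eq:equilibriumCompact}) on the left by $(1_N^T\otimes I_n)$. Since the graph is weight-balanced, $1_N^T L = 0$, so $(1_N^T\otimes I_n)\bm{L} = 0$. This collapses the equation to $\varepsilon(1_N^T\otimes I_n)(\bar{\bm{x}}(\varepsilon) - \bm{b}) = 0$, and dividing by $\varepsilon>0$ together with $(1_N^T\otimes I_n)\bm{b} = d$ yields (\ref{eq:resouceAllocationConstraint}). This is really the only place where the weight-balanced hypothesis is used for this theorem.

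Next I would derive the linear $\varepsilon$-bound on $\|\bar{\bm{x}}(\varepsilon)-\bm{x}^*\|$ by re-invoking the fixed-point machinery from Theorem~\ref{thm:1}. Writing $\bar{\bm{z}}(\varepsilon) \triangleq \bar{\bm{x}}(\varepsilon)-\bm{x}^*$, we know $\bar{\bm{z}}(\varepsilon) = \Phi(\bar{\bm{z}}(\varepsilon),\varepsilon)$ and $\bar{\bm{z}}(\varepsilon) \in \delta\mathbb{B}$. Evaluating (\ref{eq:maptoitself})'s derivation at the fixed point gives
\begin{equation*}
\|\bar{\bm{z}}(\varepsilon)\| \leq k_1\varepsilon\|\bm{\lambda}_0\| + k_1\|\bm{r}(\bar{\bm{z}}(\varepsilon))\|.
\end{equation*}
Applying (\ref{eq:rz2}) with $\bm{z}'=0$ (noting $\bm{r}(0)=0$) gives $\|\bm{r}(\bar{\bm{z}}(\varepsilon))\|\leq \frac{1}{k_1+1}\|\bar{\bm{z}}(\varepsilon)\|$, so that
\begin{equation*}
\tfrac{1}{k_1+1}\|\bar{\bm{z}}(\varepsilon)\| \leq k_1\varepsilon\|\bm{\lambda}_0\|,
\end{equation*}
i.e.\ $\|\bar{\bm{z}}(\varepsilon)\|\leq k_1(k_1+1)\|\bm{\lambda}_0\|\,\varepsilon = \gamma_1\varepsilon$, which is the first half of (\ref{eq:LipchitzContinuous}).

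Then I would transfer this bound to the dual variable. From the first line of (\ref{eq:equilibriumCompact}) and (\ref{eq:optimalSolution}), $\bar{\bm{\lambda}}(\varepsilon)-\bm{\lambda}^* = -(\nabla f(\bar{\bm{x}}(\varepsilon))-\nabla f(\bm{x}^*))$. Using the mean-value form $\nabla f(\bar{\bm{x}}(\varepsilon))-\nabla f(\bm{x}^*)=\bigl(\int_0^1 \nabla^2 f(\bm{x}^*+t\bar{\bm{z}}(\varepsilon))\,dt\bigr)\bar{\bm{z}}(\varepsilon)$ and the fact that the integration path lies in a $\delta$-ball, we obtain $\|\bar{\bm{\lambda}}(\varepsilon)-\bm{\lambda}^*\|\leq \sup_{\|\bm{z}\|\leq \delta}\|\nabla^2 f(\bm{z})\|\cdot \gamma_1\varepsilon = \gamma_2\varepsilon$, completing (\ref{eq:LipchitzContinuous}). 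The limit (\ref{eq:solutionLimit}) is then immediate by letting $\varepsilon\to 0$ in these two linear bounds.

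I don't expect a serious obstacle: the constraint feasibility is a one-line consequence of weight-balance, and the remaining estimates simply re-use the contraction argument already established in Theorem~\ref{thm:1}, together with a standard mean-value bound on $\nabla f$. The only subtlety worth flagging is the need to ensure $\bar{\bm{z}}(\varepsilon)\in\delta\mathbb{B}$ before invoking (\ref{eq:rz2}) at the fixed point, which is guaranteed by the self-mapping property (\ref{eq:maptoitself}) for $\varepsilon\in(0,\varepsilon_0)$.
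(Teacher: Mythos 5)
Your proposal is correct and follows essentially the same route as the paper: feasibility from $(1_N^T\otimes I_n)\bm{L}=0$, the bound on $\|\bar{\bm{x}}(\varepsilon)-\bm{x}^*\|$ by evaluating the contraction estimate at the fixed point of $\Phi(\cdot,\varepsilon)$ and solving the resulting inequality, and the dual bound via a mean-value estimate on $\nabla f$ over the $\delta$-ball. Your explicit remarks that $\bm{r}(0)=0$ and that $\bar{\bm{z}}(\varepsilon)\in\delta\mathbb{B}$ must be checked before invoking the local Lipschitz bound on $\bm{r}$ are points the paper uses implicitly; they are handled correctly.
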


\begin{proof}
Since $(\bar{\bm{z}}(\varepsilon), \bar{\bm{\lambda}}(\varepsilon))$ satisfies \eqref{eq:equilibriumCompact} and $(1^T_{N}\otimes I_n)\bm{L} = 0, (1^T_{N}\otimes I_n)\bm{b} = d$, equality \eqref{eq:resouceAllocationConstraint} holds.

Next, from the proof of Theorem \ref{thm:1}, $\bar{\bm{z}}(\varepsilon) = \bar{\bm{x}}(\varepsilon) -  \bm{x}^*$ is a fixed point of map $\Phi(\bm{r}(\bm{z}), \varepsilon)$. It follows from \eqref{eq:rz2} and \eqref{eq:maptoitself} that
\begin{equation}
\begin{aligned}
\|\bar{\bm{x}}(\varepsilon) -  \bm{x}^*\| & = \|\Phi(\bm{r}(\bar{\bm{x}}(\varepsilon) - \bm{x}^*), \varepsilon)\|\\
& \leq k_1 \|\lambda_0\| \varepsilon + k_1\|\bm{r}(\bar{\bm{x}}(\varepsilon) - \bm{x}^*)\| \\
& \leq k_1 \|\lambda_0\| \varepsilon + \frac{k_1}{k_1+1}\|\bar{\bm{x}}(\varepsilon) -  \bm{x}^*\|
\end{aligned}
\end{equation}
Thus there holds
\begin{equation}
\|\bar{\bm{x}}(\varepsilon) -  \bm{x}^*\| \leq k_1(k_1+1)\|\bm{\lambda}_0\|\varepsilon.
\end{equation}
On the other hand, it follows from \eqref{eq:maptoitself} that
\begin{equation}
\|\bar{\bm{x}}(\varepsilon) -  \bm{x}^*\| \leq \delta.
\end{equation}
Thus
\begin{equation}
\begin{aligned}
\|\bar{\bm{\lambda}}(\varepsilon) -  \bm{\lambda}^*\| & = \|\nabla f(\bar{\bm{x}}(\varepsilon)) - \nabla f(\bm{x}^*)\| \\
& \leq \sup_{\|\bm{z}\|\leq \delta}\{\|\nabla^2 f(\bm{z})\|\} \cdot \|\bar{\bm{x}}(\varepsilon) -  \bm{x}^*\|
\end{aligned}
\end{equation}
Therefore, \eqref{eq:LipchitzContinuous} holds, which also implies \eqref{eq:solutionLimit}.
\end{proof}

Note that the sub-optimal solution depends closely on not only the parameter $\varepsilon$, but also the parameter $\bm{b}$. The following result shows a special case when $\bm{b}$ happens to equal the $\bm{x}^*$.

\begin{corollary}
With $\bm{L}\bm{\lambda_0} = \bm{b}- \bm{x}^*$, we can chose $\bm{\lambda}_0 = 0$ provided $\bm{b} = \bm{x}^*$. Then algorithm \eqref{eq:algorithmCompact} with any $\varepsilon >0$ has its equilibrium as $(\bm{x}^*, \bm{\lambda}^*)$, i.e., it gives exactly the optimal solution to problem \eqref{eq:optimizationProblem}.
\end{corollary}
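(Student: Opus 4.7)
The plan is quite direct: the corollary claims that when $\bm{b} = \bm{x}^*$, the pair $(\bm{x}^*, \bm{\lambda}^*)$ is the (unique) equilibrium of algorithm \eqref{eq:algorithmCompact} for any $\varepsilon > 0$, not just for small $\varepsilon$. My approach is to (a) justify the choice $\bm{\lambda}_0 = 0$, (b) verify the equilibrium equations by direct substitution, and (c) invoke a uniqueness argument that does not rely on $\varepsilon$ being small.

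First, I would note that the defining relation $\bm{L}\bm{\lambda}_0 = \bm{b} - \bm{x}^*$ reduces under the hypothesis $\bm{b} = \bm{x}^*$ to $\bm{L}\bm{\lambda}_0 = 0$, which obviously admits $\bm{\lambda}_0 = 0$ as a solution. Then I would plug $(\bm{x}^*, \bm{\lambda}^*)$ into the equilibrium system \eqref{eq:equilibriumCompact}. The first equation $\nabla f(\bm{x}^*) + \bm{\lambda}^* = 0$ is precisely the first condition in \eqref{eq:optimalSolution}. For the second equation, the term $-\varepsilon(\bm{x}^* - \bm{b})$ vanishes because $\bm{b} = \bm{x}^*$, and $\bm{L}\bm{\lambda}^* = 0$ because $\bm{\lambda}^* = 1_N \otimes \mu^*$ lies in the kernel of $\bm{L} = L \otimes I_n$ (since $L 1_N = 0$ for a weight-balanced Laplacian). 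Hence $(\bm{x}^*, \bm{\lambda}^*)$ satisfies \eqref{eq:equilibriumCompact} for every $\varepsilon > 0$.

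For uniqueness, I would reuse the argument given in part (ii) of the proof of Theorem \ref{thm:1}: if $(\bm{x}, \bm{\lambda})$ and $(\bm{x}', \bm{\lambda}')$ both satisfy \eqref{eq:equilibriumCompact}, combining the two equations yields
\[
0 = (\bm{x} - \bm{x}')^T(\nabla f(\bm{x}) - \nabla f(\bm{x}')) + \varepsilon^{-1}(\bm{\lambda} - \bm{\lambda}')^T \bm{L}(\bm{\lambda} - \bm{\lambda}'),
\]
and since both terms are nonnegative, strong convexity of $f$ forces $\bm{x} = \bm{x}'$ and hence $\bm{\lambda} = -\nabla f(\bm{x}) = \bm{\lambda}'$. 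Crucially, this uniqueness argument uses only $\varepsilon > 0$ and the weight-balanced property $\bm{L} + \bm{L}^T \succeq 0$; no smallness of $\varepsilon$ is required. Combining existence by construction with this uniqueness gives the claim.

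The only real subtlety (the ``obstacle,'' though it is mild) is resisting the temptation to simply quote Theorem \ref{thm:3} with $\gamma_1 = 0$: that bound is valid only on $(0, \varepsilon_0)$, whereas the corollary asserts the conclusion for every $\varepsilon > 0$. Direct verification of the equilibrium equations is what lifts the statement to arbitrary $\varepsilon > 0$, and it is also cleaner than re-tracing the fixed-point argument.
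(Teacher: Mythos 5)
Your proof is correct, and since the paper states this corollary without any proof, your argument fills a genuine gap rather than duplicating one. The two key points you make are exactly the right ones: direct substitution of $(\bm{x}^*,\bm{\lambda}^*)$ into \eqref{eq:equilibriumCompact} works because $\bm{x}^*-\bm{b}=0$ and $\bm{L}\bm{\lambda}^*=(L1_N)\otimes\mu^*=0$, and the uniqueness argument from part (ii) of the proof of Theorem~\ref{thm:1} uses only $\varepsilon>0$ together with $\bm{L}+\bm{L}^T\succeq 0$ and strong convexity, so it is not restricted to $\varepsilon\in(0,\varepsilon_0)$. You are also right to flag that simply setting $\bm{\lambda}_0=0$ in the bound of Theorem~\ref{thm:3} (so $\gamma_1=0$) would only cover $\varepsilon\in(0,\varepsilon_0)$ and would not by itself justify the corollary's claim for \emph{every} $\varepsilon>0$; the direct verification plus the $\varepsilon$-independent uniqueness argument is what makes the stronger statement go through.
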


\begin{remark}
Theorems \ref{thm:2} and \ref{thm:3} showed that, different from some algorithms like \eqref{eq:algorithmPI}, the algorithm \eqref{eq:algorithmNew} is of simple dynamics, and convergent over weight-balanced graphs, without depending on the network topology; also, it can be adjusted easily to reduce the optimization error by tuning the parameter $\varepsilon$.
\end{remark}

\subsection{Numerical example}

Here we give an illustrative example for our algorithm. Consider the following problem
\begin{equation*}
\begin{aligned}
& \min f(\bm{x}) = \frac{1}{2}(x_1^2 + \frac{1}{4}x_2^2 + x_3^2)\\
& \text{ s.t.    } x_1 + x_2 + x_3 = 1
\end{aligned}
\end{equation*}
with a multi-agent system consisting of three agents, where agent $i$ manipulates variable $x_i$ for $i = 1, 2, 3$, and their interaction graph is shown in Fig. \ref{fig:topology}.

Our distributed algorithm can be given as follows:
\begin{equation*}
\left\{\begin{aligned}
\dot{x}_1 & = - x_1 -\lambda_1\\
\dot{x}_2 & = - \frac{1}{4}x_2 -\lambda_2\\
\dot{x}_3 & = - x_3 - \lambda_3 \\
\varepsilon \dot{\lambda}_1 & = - (\lambda_1 - \lambda_3) + \varepsilon (x_1 - \frac{1}{3}) \\
\varepsilon \dot{\lambda}_2 & = - (\lambda_2 - \lambda_1) + \varepsilon (x_2 - \frac{1}{3}) \\
\varepsilon \dot{\lambda}_3 & = - (\lambda_3 - \lambda_2) + \varepsilon (x_3 - \frac{1}{3}) \\
\end{aligned}\right.
\end{equation*}
By some calculations, the equilibrium point $(\bar{\bm{x}}(\varepsilon), \bar{\bm{\lambda}}(\varepsilon))$ is
\begin{equation*}
\begin{aligned}
\begin{bmatrix}
\bar{x}_1(\varepsilon)\\
\bar{x}_2(\varepsilon)\\
\bar{x}_3(\varepsilon)
\end{bmatrix} & =
\begin{bmatrix}
\frac{1}{6}\\
\frac{2}{3}\\
\frac{1}{6}
\end{bmatrix} +
\frac{\varepsilon}{6(4\varepsilon^2 + 9\varepsilon + 6)}
\begin{bmatrix}
4\varepsilon + 9\\
- 8\varepsilon - 12\\
4\varepsilon +3
\end{bmatrix} \\
\begin{bmatrix}
\bar{\lambda}_1(\varepsilon)\\
\bar{\lambda}_2(\varepsilon)\\
\bar{\lambda}_3(\varepsilon)
\end{bmatrix} & =
\begin{bmatrix}
-\frac{1}{6}\\
-\frac{1}{6}\\
-\frac{1}{6}
\end{bmatrix} +
\frac{\varepsilon}{6(4\varepsilon^2 + 9\varepsilon + 6)}
\begin{bmatrix}
-(4\varepsilon + 9)\\
2\varepsilon + 3\\
-(4\varepsilon +3)
\end{bmatrix}
\end{aligned}
\end{equation*}
Indeed, the optimal solution of the problem is $\bm{x}^* = (\frac{1}{6}, \frac{2}{3}, \frac{1}{6})^T$ because, with the Cauchy inequality, $(x_1^2 + \frac{1}{4}x_2^2 + x_3^2)(1^2 + 2^2 + 1^2) \geq (x_1+x_2+x_3)^2 = 1$ and equality holds if and only if $\bm{x} = k(1,2,1)^T$ for some $k\in \mathbb{R}$. Due to the equality constraint, $k$ must be $\frac{1}{6}$. Moreover, we observe that $\bm{\bar{x}}(\varepsilon)$ satisfies the constraint, i.e., $\bar{x}_1(\varepsilon) + \bar{x}_2(\varepsilon) + \bar{x}_3(\varepsilon) = 1$. Furthermore, the distance between the algorithm equilibrium and the optimal solution is dominated by a term proportional to $\varepsilon$.

Simulations are taken with $\varepsilon = 1$, $\varepsilon = 0.1$, and $\varepsilon = 0.01$. The trajectories and the Lyapunov function are shown in Fig. \ref{fig:F1} - Fig. \ref{fig:F4}. It is indicated that our simple algorithm converges to its equilibrium, which approaches the optimal point as $\varepsilon$ tends to zero.

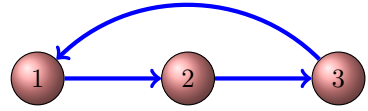
\begin{figure}
\centering
\begin{tikzpicture}[scale = 2]

\tikzstyle{every node}=[shape=circle,draw,minimum size = 20 pt,ball color = red!40]
\path (180 : 1cm)   node (1) {$\,1\,$};
\path (0 : 0cm)  node (2) {$\,2\,$};
\path (0 : 1cm)  node (3) {$\,3\,$};

\draw [->][ultra thick] [blue]  (1) -- (2);
\draw [->][ultra thick] [blue]  (2) -- (3);
\draw [->][ultra thick]  [blue] (3) to [out=135,in=45] (1);

\end{tikzpicture}
\caption{The communication graph of the three agents.
}\label{fig:topology}
\end{figure}


\begin{figure}
\begin{center}
\includegraphics[width=7.4cm]{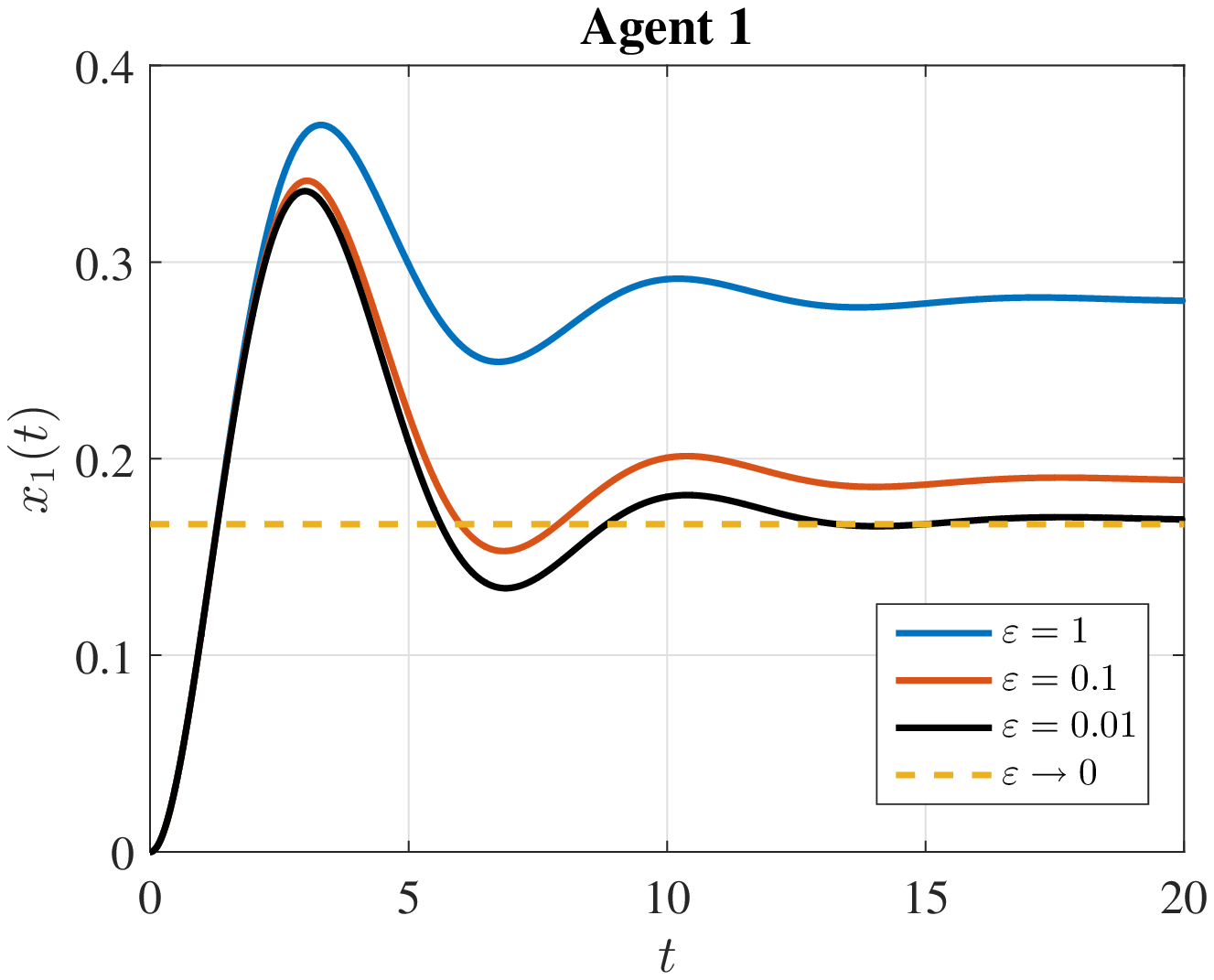}    
\caption{The trajectories of allocation of agent $1$} \label{fig:F1}
\end{center}
\end{figure}

\begin{figure}
\begin{center}
\includegraphics[width=7.4cm]{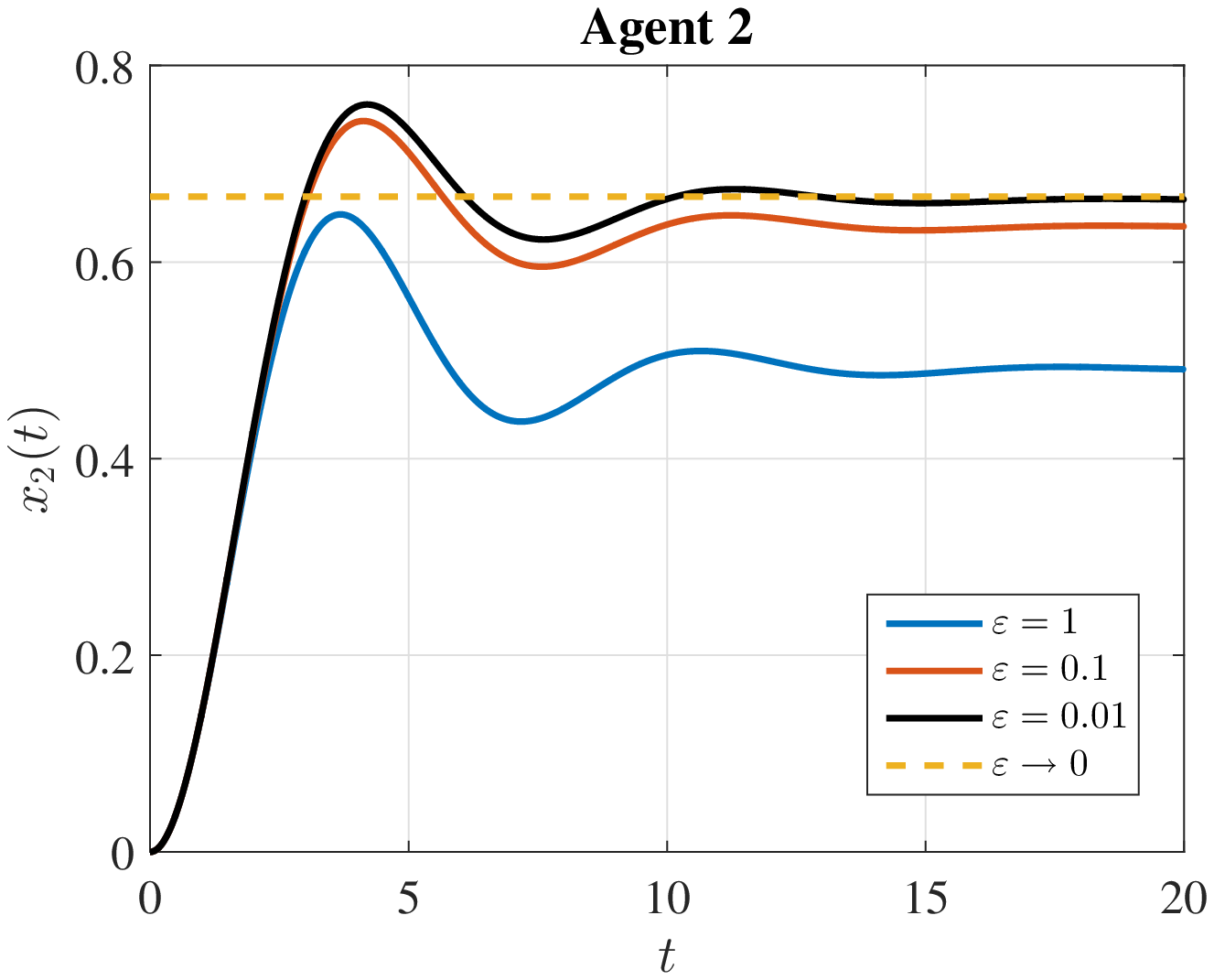}    
\caption{The trajectories of allocation of agent $2$} \label{fig:F2}
\end{center}
\end{figure}

\begin{figure}
\begin{center}
\includegraphics[width=7.4cm]{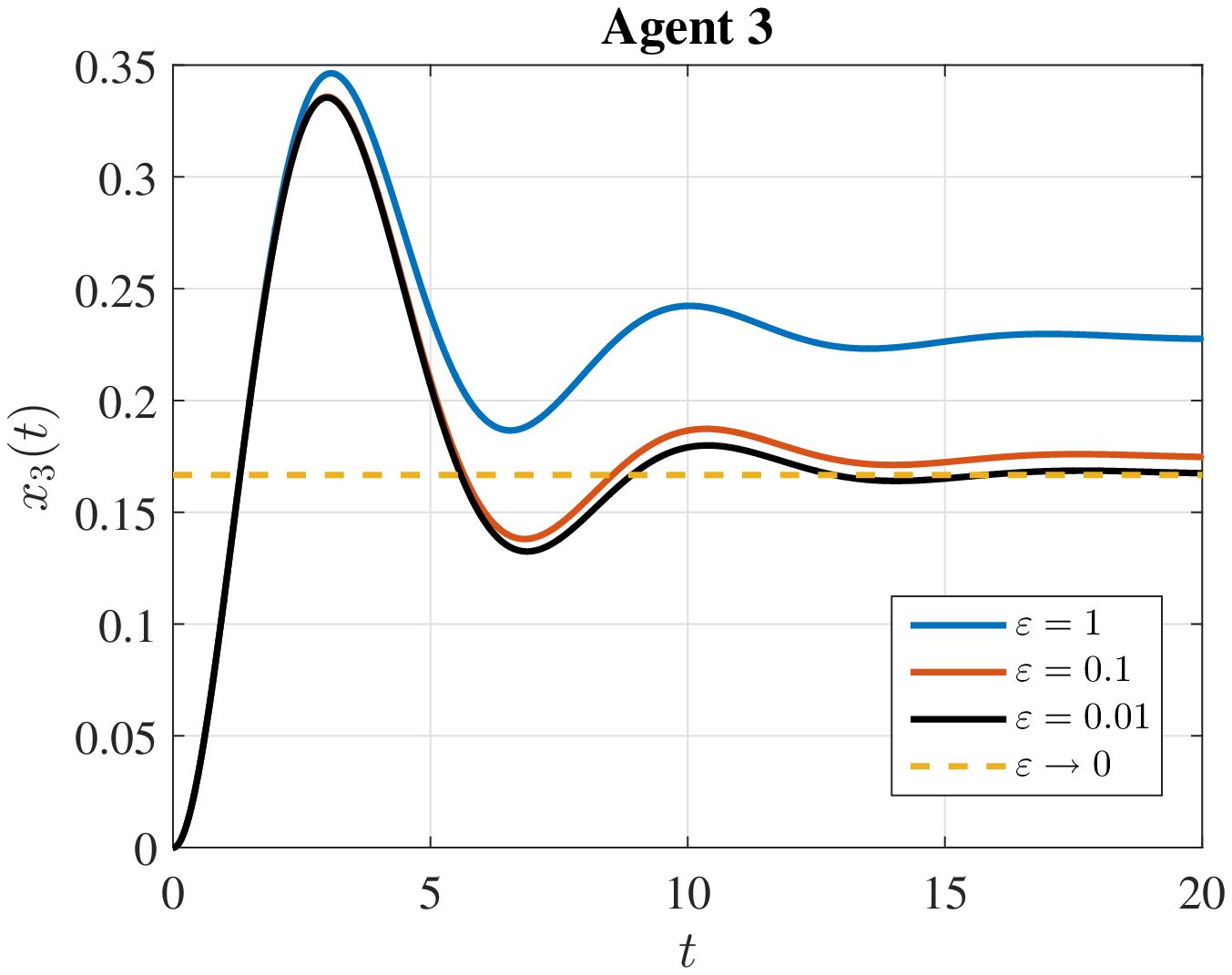}    
\caption{The trajectories of allocation of agent $3$} \label{fig:F3}
\end{center}
\end{figure}

\begin{figure}
\begin{center}
\includegraphics[width=7.4cm]{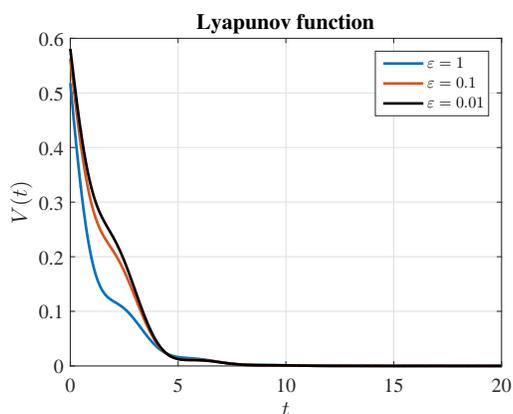}    
\caption{The trajectories of Lyapunov function} \label{fig:F4}
\end{center}
\end{figure}


\section{Conclusions}
In this paper, a distributed sub-optimal continuous-time algorithm has been proposed for resource allocation optimization problem. The convergence has been proved over any strongly connected and weight-balanced graph and the sub-optimality have been analyzed with numerical simulation. At the same time, the singular perturbation ideas have been shown to be useful in the distributed sub-optimal design, though the problems occurred are not completely covered by the existing singular perturbation theory.  In fact, based on the proposed approach, we are considering some systematical ways to further make the singular perturbation techniques serve the distributed algorithm design with various constraints.

\bibliographystyle{dcu}        

\bibliography{E:/HongLab/bib/refference0,E:/HongLab/bib/refference1}

\end{document}